\newcommand{\pd}[2]{\frac{\partial#1}{\partial#2}}
\newcommand{\mc}[1]{\mathcal{#1}}
\newcommand{\LRp}[1]{\left( #1 \right)}
\newtheorem{lemma}[theorem]{Lemma}
\newcommand{\eval}[2][\right]{\relax
  \ifx#1\right\relax \left.\fi#2#1\rvert}
\newcolumntype{C}[1]{>{\centering\let\newline\\\arraybackslash\hspace{0pt}}m{#1}}
\newcommand*\diff[1]{\mathop{}\!{\mathrm{d}#1}}
\renewcommand\d[1]{\mspace{6mu}\mathrm{d}#1\@ifnextchar\d{\mspace{-3mu}}{}}
\author{Jesse Chan, T. Warburton}
\date{}
\title{A short note on a Bernstein-Bezier basis for the pyramid}
\begin{document}
\maketitle
\begin{abstract}
We introduce a Bernstein-Bezier basis for the pyramid, whose restriction to the face reduces to the Bernstein-Bezier basis on the triangle or quadrilateral.  The basis satisfies the standard positivity and partition of unity properties common to Bernstein polynomials, and spans the same space as non-polynomial pyramid bases in the literature \cite{bergot2010higher, nigam2012high, bergot2013higher, chan2015orthogonal}.  
\end{abstract}

\section{Introduction}

Bernstein-Bezier bases have long been ubiquitous in graphics and computer-aided design \cite{farin1986triangular, farin2014curves}, though they have recently received attention for their utility in the meshing of curved geometries and the numerical solution of partial differential equations \cite{hindenlang2010unstructured, johnen2014geometrical, geuzaine2015generation,michoski2015foundations}.  Ainsworth and Kirby both noticed that the structure of Bernstein-Bezier polynomials on simplices allowed for linear-complextiy algorithms for the assembly and multiplication of finite element matrices \cite{kirby2011fast, ainsworth2011bernstein, kirby2012fast}, as well as for linear-complexity Discontinuous Galerkin solvers \cite{kirby2015efficient}.  The hierarchical structure of Bernstein polynomials also facilitates the construction of structure-preserving vectorial basis functions \cite{kirby2014low, ainsworth2015bernstein}.  Additionally, their close connection to B-splines has been exploited to simplify the implementation of NURBS-based finite element methods \cite{borden2011isogeometric}.  

Bernstein bases on tensor product elements (quadrilaterals and hexahedra) enjoy a natural tensor-product construction, while the construction of Bernstein bases on simplices rely on barycentric coordinates to generalize the construction in one space dimension.  Bernstein-Bezier bases on the prism may similarly be expressed as the tensor product of triangular and one-dimensional basis functions.  However, Bernstein-Bezier bases for the pyramid (which acts as a transitional piece to couple together hexahedral and tetrahedral elements \cite{bergot2013higher, chan2015gpu}) have received less attention.  Recent work has considered the extension of such ideas to pyramids, but define Bernstein-Bezier pyramid bases (which are distinct from Bernstein-Bezier pyramid algorithms \cite{goldman2002pyramid, ainsworth2014pyramid}) by splitting the pyramid into two tetrahedra \cite{chen20113d,ainsworthpyr}.  The work presented here presents an alternative construction based on pyramid bases found in the literature \cite{bergot2010higher, nigam2012high, chan2015orthogonal}, which are non-polynomial, but contain the space of polynomials of total degree $N$ under vertex-based mappings.  Quadrature and the computation of matrices for finite element methods are also discussed, and condition numbers are presented for mass and stiffness matrices on the reference element.  

\section{Bernstein-Bezier bases for triangles and quadrilaterals}

Bernstein-Bezier basis functions on the triangle \cite{farin1986triangular} may be defined in terms of the barycentric coordinates $\lambda_1, \lambda_2, \lambda_3$
\[
B^N_{ijk} = C^N_{ijk} \lambda_1^i\lambda_2^j\lambda_3^k
\]
with $i+j+k = N$ and 
\[
C^N_{ijk} = \frac{N\,!}{i\,!j\,!k\,!}.
\]
For the unit right triangle with coordinates $(r,s)$, these barycentric coordinates are given explicitly as
\[
\lambda_1 = 1- (r+s), \quad \lambda_2 = r, \quad \lambda_3 = s.
\]
Assuming the Duffy transform
\[
r = a(1-b), \quad s = b,
\]
which maps the unit cube with coordinates $(a,b) \in [0,1]^2$ to the unit right triangle, we have that
\[
\lambda_1 = (1-a)(1-b), \quad \lambda_2 = a(1-b), \quad \lambda_3 = b.
\]
This gives a definition for the Bezier triangle basis on the quad
\[
B^N_{ijk} = C^N_{ijk} a^j(1-a)^ib^k(1-b)^{i+j}.
\]
Since $N = i + j + k$, $i+j = N-k$, and we may rewrite the above as
\begin{align*}
B^N_{ijk}(a,b) &= C^N_{ijk} a^j(1-a)^i b^k(1-b)^{N-k}\\
&= \frac{(N-k)\,!}{i\,!j\,!} a^j(1-a)^i B^N_k(b) \\
&= B^{N-k}_i(a)B^N_k(b) = B^{N-k}_j(a)B^N_k(b)
\end{align*}
where 
\[
B^N_k(b) = \binom{N}{k} b^k(1-b)^{N-k},
\]
is the standard order $N$ 1D Bernstein polynomial and $B^{N-k}_i(a)$ is the $i$th Bernstein polynomial of order $N-k$.  This decomposition is at the heart of the sum-factorization techniques used in \cite{kirby2011fast, ainsworth2011bernstein, kirby2012fast}.  Since the same barycentric approach is used to define Bernstein polynomials on a simplex of arbitrary dimension, the restriction of tetrahedral Bernstein-Bezier basis functions to a triangular face results in the triangular Bernstein-Bezier basis.  

Bernstein-Bezier basis functions on quadrilaterals or hexahedra are defined using a tensor product construction; for example, on a reference quadrilateral with coordinates $(a,b)$, 
\[
B^N_{ij}(a,b) = B^N_i(a)B^N_j(b).  
\]
Similarly to the tetrahedron, restricting hexahedral Bernstein-Bezier functions to a single face results in Bernstein-Bezier polynomials over the quadrilateral.

\section{A Bernstein-Bezier basis for the pyramid}

To extend this construction to the pyramid, we use a collapsed coordinate system \cite{karniadakis1999spectral}.  We define the unit cube with coordinates $(a,b,c)\in [0,1]^3$, such that the unit cube is mapped to the unit right pyramid through the transform
\[
r = a(1-c), \quad s = b(1-c), \quad t = c.
\]
A Bernstein-Bezier basis for the pyramid may be defined over the unit cube as follows
\[
B_{ijk}(a,b,c) = B^{N-k}_i(a)B^{N-k}_j(b)B^{N}_k(c).
\]
where the indices obey
\begin{align*}
0 &\leq k\leq N, \quad 0 \leq i,j \leq N-k.
\end{align*}
The total dimension of this space is $N_p = (N+1)(N+2)(2N+3)/6$.  
  
This construction is exactly what results from combining triangle Bernstein Bezier basis functions in the $a,c$ and $b,c$ coordinates.  The Bernstein pyramid basis above satisfies the standard positivity and partition of unity properties, and the traces reduce to Bernstein-Bezier basis functions on the triangle or quadrilateral.  This latter property simplifies the enforcement of conformity between pyramids and tetrahedral or hexahedral elements.  Additionally, the Bernstein pyramid space spans the same space as the rational basis of Bergot, Cohen, and Durufle \cite{bergot2010higher}, and thus contains polynomials of total degree $N$ under a vertex-based mapping of the unit right pyramid to physical space.  

\begin{lemma}
The Bernstein pyramid basis satisfies a pointwise positivity and partition of unity property on the pyramid. 
\end{lemma}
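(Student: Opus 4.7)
The plan is to reduce both properties to the corresponding well-known facts about one-dimensional Bernstein polynomials, exploiting the fact that the pyramid basis $B_{ijk}(a,b,c) = B^{N-k}_i(a) B^{N-k}_j(b) B^N_k(c)$ is a pure product of 1D factors in the collapsed coordinates $(a,b,c) \in [0,1]^3$. The pyramid itself is the image of the unit cube under the Duffy-type map $r=a(1-c),\ s=b(1-c),\ t=c$, so pointwise statements in $(a,b,c)$ transfer directly to pointwise statements on the pyramid.

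First I would handle positivity. Each 1D factor $B^M_n(x) = \binom{M}{n} x^n (1-x)^{M-n}$ is a product of nonnegative quantities for $x \in [0,1]$, hence nonnegative there. Since $a,b,c$ all lie in $[0,1]$ on the reference cube, and the corresponding point $(r,s,t)$ lies in the reference pyramid, the product $B^{N-k}_i(a) B^{N-k}_j(b) B^N_k(c)$ is nonnegative at every point of the pyramid. Note that the collapsed map is a bijection on the interior of the pyramid, and the apex (where $c=1$) is handled automatically because $B^{N-k}_i(a)B^{N-k}_j(b) B^N_k(1)$ vanishes unless $k=N$, in which case the indices $i=j=0$ force the single factor $B^N_N(c)$ to evaluate to $1$.

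Next I would establish the partition of unity by direct summation, using the tensor-product structure to factor the triple sum:
\begin{align*}
\sum_{k=0}^{N}\sum_{i=0}^{N-k}\sum_{j=0}^{N-k} B_{ijk}(a,b,c)
&= \sum_{k=0}^{N} B^N_k(c) \LRp{\sum_{i=0}^{N-k} B^{N-k}_i(a)} \LRp{\sum_{j=0}^{N-k} B^{N-k}_j(b)}.
\end{align*}
The two inner sums each equal $1$ by the standard 1D partition of unity $\sum_{n=0}^{M} B^M_n(x) = (x + (1-x))^M = 1$, applied with $M = N-k$. The remaining sum $\sum_{k=0}^N B^N_k(c) = 1$ by the same identity, so the total sum equals $1$ pointwise on the cube, and hence on the pyramid.

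There is no real obstacle here; the only subtlety worth flagging is the behavior at the degenerate apex $c=1$, where the collapsed coordinate system is singular. Because the 1D Bernstein identities hold as polynomial identities in $(a,b,c)$ on all of $[0,1]^3$, both properties persist at the apex even though multiple $(a,b)$ correspond to that single pyramid point, so the pointwise statements on the pyramid are unambiguous.
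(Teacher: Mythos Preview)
Your proof is correct and follows essentially the same approach as the paper: both reduce positivity to the nonnegativity of the one-dimensional Bernstein factors on $[0,1]$, and both establish the partition of unity by factoring the triple sum into a product of three one-dimensional Bernstein sums, each equal to $1$. Your version adds a bit more detail (the explicit binomial identity and the remark about the apex), but the argument is the same.
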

\begin{proof}
Since each component of the Bernstein pyramid basis is pointwise positive on the cube, the basis is pointwise positive over  the pyramid.  We may also show that the partition of unity property for the Bernstein pyramid is preserved: 
\begin{align*}
\sum_{k=0}^N \sum_{i=0}^{N-k}\sum_{j=0}^{N-k} B_{ijk} &= \sum_{k=0}^N B_k(c) \sum_{i=0}^{N-k} B^{N-k}_i(a) \sum_{j=0}^{N-k} B^{N-k}_j(b) = 1
\end{align*}
by the fact that $B^N_k$, $B^{N-k}_i,B^{N-k}_j$ all satisfy a partition of unity property over $[0,1]$.
\end{proof}

\begin{lemma}
The trace spaces of the Bernstein pyramid are Bernstein polynomials on the faces.   
\end{lemma}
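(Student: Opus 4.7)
The plan is to examine each of the five faces of the reference pyramid and use the endpoint values of univariate Bernstein polynomials, namely $B^M_\ell(0) = 0$ unless $\ell = 0$ and $B^M_\ell(1) = 0$ unless $\ell = M$, to collapse the triple sum defining $B_{ijk}$ into a lower-dimensional Bernstein family. In the collapsed coordinates $(a,b,c) \in [0,1]^3$, the five faces correspond to $c=0$ (the quadrilateral base) together with $a=0$, $a=1$, $b=0$, $b=1$ (the four triangular faces meeting at the apex $c=1$, which is a coordinate singularity).

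For the quadrilateral base $c=0$, since $B^N_k(0)$ vanishes unless $k = 0$, only the basis functions with $k=0$ survive, and these reduce to $B^N_i(a) B^N_j(b)$ for $0 \leq i, j \leq N$. This is exactly the tensor-product Bernstein-Bezier basis on the quadrilateral, established in Section 2.

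For a representative triangular face, take $a = 0$: the factor $B^{N-k}_i(0)$ vanishes unless $i = 0$, so the surviving functions are $B^{N-k}_j(b) B^N_k(c)$ with $0 \leq k \leq N$ and $0 \leq j \leq N-k$. By the factorization identity derived in Section 2, this product is exactly the Bernstein-Bezier triangle basis in the Duffy variables $(b,c)$; moreover, restricting the pyramid map $r = a(1-c)$, $s = b(1-c)$, $t = c$ to $a = 0$ reproduces precisely the Duffy transform on the face. The faces $b = 0$, $a = 1$, and $b = 1$ are handled by symmetry, invoking the endpoint identity at $1$ for the latter two, with $B^{N-k}_i(1)$ forcing $i = N-k$ (respectively $j = N-k$) and the associated factor evaluating to $1$.

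The only real obstacle is the bookkeeping needed to match the restricted basis with the intrinsic Bernstein-Bezier basis on the face, with the correct assignment of face vertices to barycentric coordinates. Once this correspondence is verified for a single triangular face, the remaining three cases follow from the symmetry of the collapsed coordinate map under the interchanges $a \leftrightarrow 1-a$ and $b \leftrightarrow 1-b$, and the conclusion is immediate from the Section 2 identification.
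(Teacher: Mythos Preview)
Your argument is correct and follows essentially the same route as the paper: restrict to $c=0$ for the quadrilateral face and to $a\in\{0,1\}$ or $b\in\{0,1\}$ for the triangular faces, use the endpoint vanishing of univariate Bernstein polynomials to pick out the surviving indices, and identify the resulting products with the quadrilateral and (Duffy-mapped) triangular Bernstein bases from Section~2. The paper likewise treats only $c=0$ and $a=0$ explicitly and dispatches the remaining triangular faces by symmetry.
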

\begin{proof}
We may prove the first property on the unit cube by restricting $a,b$ to either $0$ or $1$ for the triangular faces, and restricting $c = 0$ for the quadrilateral face.  For the quadrilateral face, since $B_k(c) = 0$ for $k>0$, the Bernstein basis is nonzero only if $k=0$. 
\[
B_{ij0} = B_i^N(a)B_j^N(b)
\]
which is exactly the tensor product Bernstein basis over the quadrilateral face.  For the triangular faces, we take $a,b = 0,1$.  We show the trace space for $a = 0$; the other faces are similar.  If $a = 0$, the Bernstein basis is nonzero only for $i = 0$, and the nonzero Bernstein basis functions follow the form
\[
B_{0jk} = B_j^{N-k}(b)B^N_k(c).
\]
This is identical to the Bernstein basis on the triangle after a mapping to the unit quadrilateral.  
\end{proof}

\begin{lemma}
The Bernstein pyramid space is identical to the pyramid space of Bergot, Cohen, Durufle \cite{bergot2010higher}.  
\end{lemma}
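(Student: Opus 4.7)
The plan is to establish the inclusion of the Bernstein pyramid span into the BCD space and then invoke a dimension count, since both have dimension $N_p = (N+1)(N+2)(2N+3)/6$.

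First, I would expand $B_k^N(c) = \binom{N}{k} c^k (1-c)^{N-k}$ and observe that $B_i^{N-k}(a)$ and $B_j^{N-k}(b)$ are polynomials of degree at most $N-k$ in their respective arguments. This rewrites each basis function as
\[
B_{ijk}(a,b,c) = p(a,b)\, c^k (1-c)^{N-k}, \qquad p \in \mathbb{Q}_{N-k}(a,b),
\]
where $\mathbb{Q}_{N-k}(a,b)$ is the space of polynomials of degree at most $N-k$ in $a$ and at most $N-k$ in $b$. Summing these ``slices'' over $k$ decomposes the Bernstein pyramid span as a direct sum of dimension $\sum_{k=0}^N (N-k+1)^2 = N_p$, in agreement with linear independence of the $B_{ijk}$.

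Next, I would recall that the BCD space in collapsed coordinates admits the equivalent description
\[
V^{\text{BCD}}_N = \operatorname{span}\bigl\{ a^{i'} b^{j'} (1-c)^{\max(i',j')} c^m : 0 \le i',j' \le N,\ 0 \le m \le N - \max(i',j') \bigr\},
\]
obtained from the original construction in \cite{bergot2010higher} by using monomials in place of Jacobi polynomials in each direction. To place each $B_{ijk}$ in $V^{\text{BCD}}_N$, I would expand the factor $p$ monomial-wise as $\sum_{i',j' \le N-k} \gamma_{i'j'}\, a^{i'} b^{j'}$. For each term, since $\max(i',j') \le N-k$, one may factor
\[
c^k (1-c)^{N-k} = (1-c)^{\max(i',j')} \cdot c^k (1-c)^{N-k-\max(i',j')},
\]
and the residual polynomial in $c$ has degree exactly $N - \max(i',j')$, placing it in the admissible $c$-range for $(i',j')$ in the BCD description. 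Hence each monomial term, and by linearity each $B_{ijk}$, lies in $V^{\text{BCD}}_N$.

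The main obstacle is reconciling the two natural parametrizations: the Bernstein slices are indexed by the $c$-polynomial index $k$, whereas the BCD slices are indexed by $\max(i',j')$, the exponent of the $(1-c)$ factor. Compatibility holds because $p \in \mathbb{Q}_{N-k}(a,b)$ forces $\max(i',j') + k \le N$, which matches the BCD degree constraint $\max(i',j') + m \le N$ with $m=k$. Once this inclusion is established, equality of dimensions closes the proof.
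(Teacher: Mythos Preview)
Your argument is correct and takes a genuinely different route from the paper's. The paper proceeds via the orthogonal basis of \cite{chan2015orthogonal} (already known to span the BCD space), written as $\ell_i^{N-k}(a)\,\ell_j^{N-k}(b)\,(1-c)^{N-k}P_k^{2(N-k)+3}(c)$: for each fixed $k$ the $(a,b)$-factors in both that basis and the Bernstein basis span $Q_{N-k}(a,b)$, and both $c$-factors have the form $(1-c)^{N-k}\times(\text{degree-}k\text{ polynomial})$, so the triangular structure in $k$ together with the nesting $Q_{N-k}\subseteq Q_{N-\ell}$ for $\ell\le k$ forces the two spans to agree. You instead invoke the monomial description of the BCD space indexed by $\max(i',j')$ and verify the inclusion term by term. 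The paper's route is shorter once the equivalence with \cite{chan2015orthogonal} is accepted and makes the shared warped-tensor-product structure visible; your route is more self-contained and explicit, at the price of relying on the monomial form of the BCD space. One small point worth tightening: the passage from the Jacobi form in \cite{bergot2010higher} to your monomial description is not a coordinate-wise change of basis, since the $(1-c)^{\max(i,j)}$ exponent couples the directions; it needs the same kind of triangular argument (lower-degree monomials carry a smaller $\max$, hence a smaller $(1-c)$-exponent and a larger admissible $c$-range) that you later use for the main inclusion, followed by a dimension count.
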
 
\begin{proof}
This is simplest to show by showing equivalence with the pyramid basis presented in \cite{chan2015orthogonal}, which is also equivalent to the basis of Bergot, Cohen, and Durufle.  
\[
\phi_{ijk}(a,b,c) = \ell^k_i(a)\ell^k_j(b) \LRp{\frac{1-c}{2}}^k P^{2k+3}_{N-k}(c),
\]
or equivalently
\[
\ell^{N-k}_i(a)\ell^{N-k}_j(b) \LRp{\frac{1-c}{2}}^{N-k} P^{2(N-k)+3}_{k}(c).
\]
As the spaces spanned by each basis are of the same dimension, it simply remains to show that their span is identical.  Since the functions in $a,b$
\[
B^{N-k}_i(a)B^{N-k}_j(b), \qquad \ell^{N-k}_i(a)\ell^{N-k}_j(b), \qquad 0\leq i,j \leq N-k
\]  
both span $Q_{N-k}(a,b)$, and the functions in $c$
\[
\LRp{\frac{1-c}{2}}^{N-k} P^{2(N-k)+3}_{k}(c), \qquad B_k^{N}(c)
\]
are both linearly independent homogeneous polynomials of total order $N$, the two bases span the same approximation space.  

\end{proof}
As a result, the Bernstein pyramid contains the space of polynomials $P^N$ on any vertex-mapped pyramid, and high order accurate approximations may be constructed on general vertex-mapped pyramids.  

Unlike the orthogonal bases for pyramids constructed previously, the Bernstein basis absorbs the extra homogenizing factors of $c^k$ or $(1-c)^k$ into the definition of the Bernstein-Bezier basis in the $c$ direction, resulting in a very concise formula.  

\subsection{Derivatives, quadrature, and evaluation of matrices}

To compute derivatives with respect to reference coordinates $r,s,t$, we use the chain rule, involving factors of
\begin{align*}
\pd{a}{r} &= \pd{b}{s} = \frac{1}{1-c}, \qquad \pd{a}{t} = \frac{r}{(1-t)^2} = \frac{a}{1-c}\\
\pd{b}{t} &= \frac{s}{(1-t)^2}= \frac{b}{1-c}.
\end{align*}
If $k=N$, the derivative is zero.  For $k<N$, we have
\begin{align*}
\pd{B^N_{ijk}}{r} &= \pd{B^{N-k}_i(a)}{a}B^{N-k}_j(b) \frac{B^N_k(c)}{1-c}\\
\pd{B^N_{ijk}}{s} &= B^{N-k}_i(a)\pd{B^{N-k}_j(b)}{b} \frac{B^N_k(c)}{1-c}\\
\pd{B^N_{ijk}}{t} &= \pd{B^N_{ijk}}{a}\LRp{\frac{a}{1-c}} + \pd{B^N_{ijk}}{b}\LRp{\frac{b}{1-c}} + \pd{B^N_{ijk}}{c}.
\end{align*}
The additional factors of $a,b$ and $1/(1-c)$ may also be absorbed into the Bernstein polynomials, and properties of one-dimensional Bernstein-Bezier polynomials may be used to rewrite all derivatives in terms of Bernstein polynomials.  However, doing so does not readily simplify expressions for derivatives, which do not appear to be as sparse or concise for the pyramid as they are for the tetrahedron.  

Under the collapsed coordinate system, integrals on a physical pyramid $\mc{P}$ are computed via the transformation
\[
\int_{\mc{P}} u  \diff x \diff y \diff z = \int_{\widehat{\mc{P}}} u J \diff r \diff s \diff t= \int_{0}^1 \int_{0}^1 \int_{0}^1 u J(1-c)^2 \diff a \diff b \diff c.  
\]
where $J$ is the determinant of the Jacobian of the mapping from reference pyramid $\widehat{\mc{P}}$ to  $\mc{P}$.  For vertex-mapped pyramids, $J$, as well as all change of variables factors $\pd{rst}{xyz}$, are bilinear in the $a,b$ coordinates and constant in $c$ \cite[Lemma 3.5]{bergot2010higher}.  An appropriate quadrature for the pyramid may then be constructed using the tensor product of one-dimensional Gaussian quadratures in the $a, b$ coordinates and Gauss-Jacobi quadratures with weights $(2,0)$ in the $c$ coordinate, which integrates exactly the above expression.  

The entries of the mass matrix 
\begin{align*}
&M_{ijk,lmn} = \int_{\mc{P}} \phi_{ijk}\phi_{lmn}   = \int_{\widehat{\mc{P}}} \phi_{ijk}\phi_{lmn} J\\
&= \int_{a} B^{N-k}_i(a) B^{N-k}_l(a) \int_b  B^{N-k}_j(b) B^{N-k}_m(b) \int_c  B^{N}_k(c) B^{N}_n(c) (1-c)^2 J
\end{align*}
are straightforward to compute.  Fast algorithms for Bernstein-Bezier bases \cite{kirby2011fast, ainsworth2011bernstein, kirby2012fast} --- which use that the product of two Bernstein polynomials is again a scaled Bernstein polynomial, and that the integrals of Bernstein polynomials are explicitly known in terms of binomial coefficients --- may also be applied to the evaluation of the pyramidal Bernstein-Bezier mass matrix, though unlike vertex-mapped tetrahedron, the non-constant factor $J$ must be taken into account.  The weak derivative matrices $S^x, S^y, S^z$
\begin{align*}
S^x_{ijk,lmn} = \int_{\mc{P}} B^N_{ijk}\pd{B^N_{lmn}}{x}\\
S^y_{ijk,lmn} = \int_{\mc{P}} B^N_{ijk}\pd{B^N_{lmn}}{y}\\
S^z_{ijk,lmn} = \int_{\mc{P}} B^N_{ijk}\pd{B^N_{lmn}}{z}
\end{align*}
are also computed exactly using the quadrature rule described above for vertex-mapped pyramids \cite{chan2015orthogonal}.  However, the stiffness matrix is not integrated exactly using quadrature unless the transformation is affine (i.e. the base is a planar parallelogram) due to rational factors in the resulting integrals \cite{bergot2010higher}.  A non-conforming error analysis and numerical experiments appear to indicate that high order accuracy may still be attained using the quadrature rule described above to compute the stiffness matrix \cite{bergot2013higher}.  

\subsection{Conditioning of matrices}

Figure~\ref{fig:condnums} reports computed condition numbers of the reference mass and stiffness matrices for both pyramidal and tetrahedral Bernstein-Bezier bases at various orders of approximation.  Dirichlet boundary conditions are enforced to ensure that the stiffness matrix is nonsingular.  The condition number of each matrix grows exponentially in the order $N$, as expected for Bernstein-Bezier type bases.  However, the growth of the condition numbers for the pyramid is more rapid than the growth of the condition number for the tetrahedra, and may need to be addressed in order to maintain numerical accuracy at high orders of approximation.  

\begin{figure}[!h]
\centering
\subfloat[Mass matrix]{\includegraphics[width=.45\textwidth]{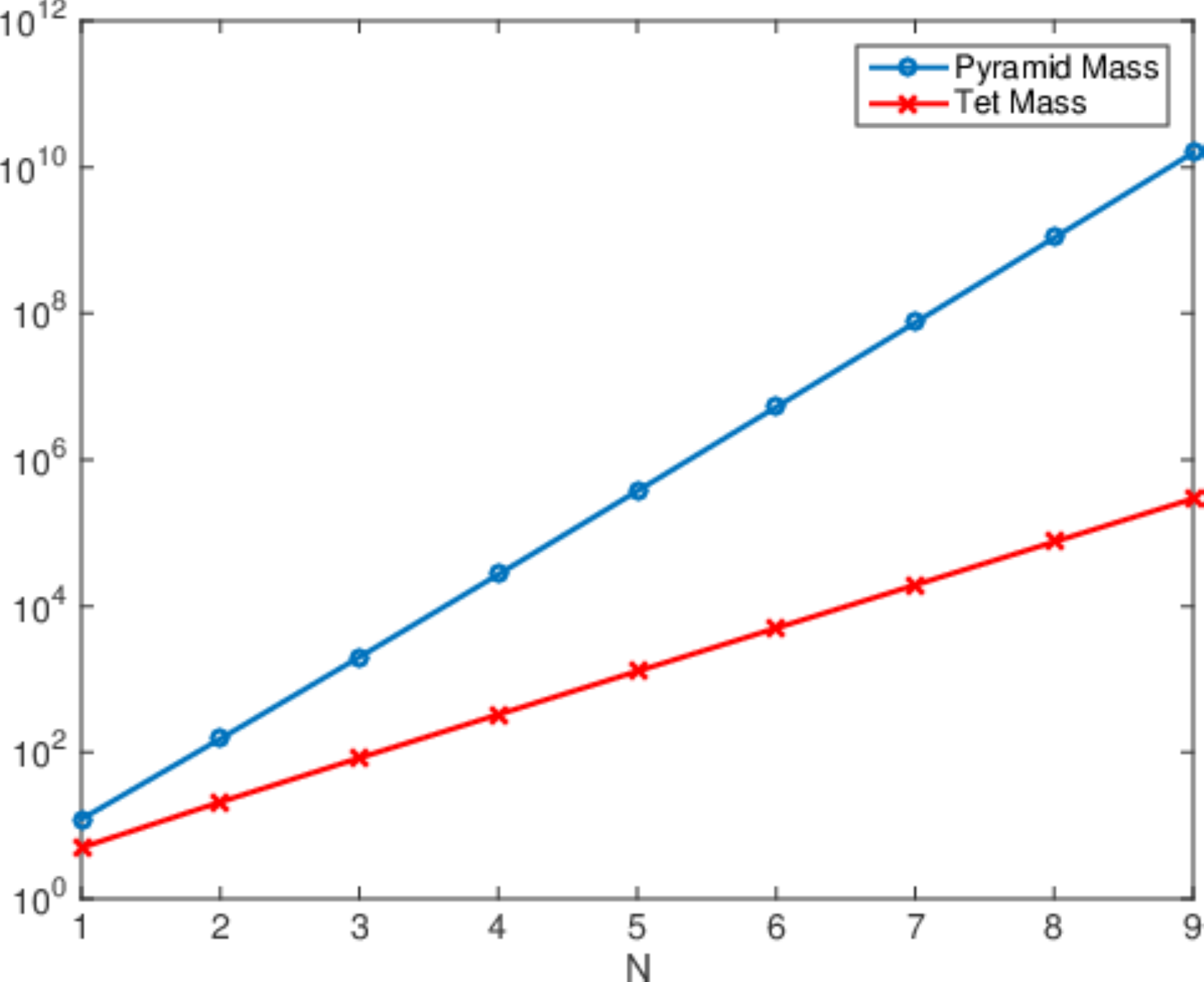}}
\hspace{1em}
\subfloat[Stiffness matrix]{\includegraphics[width=.45\textwidth]{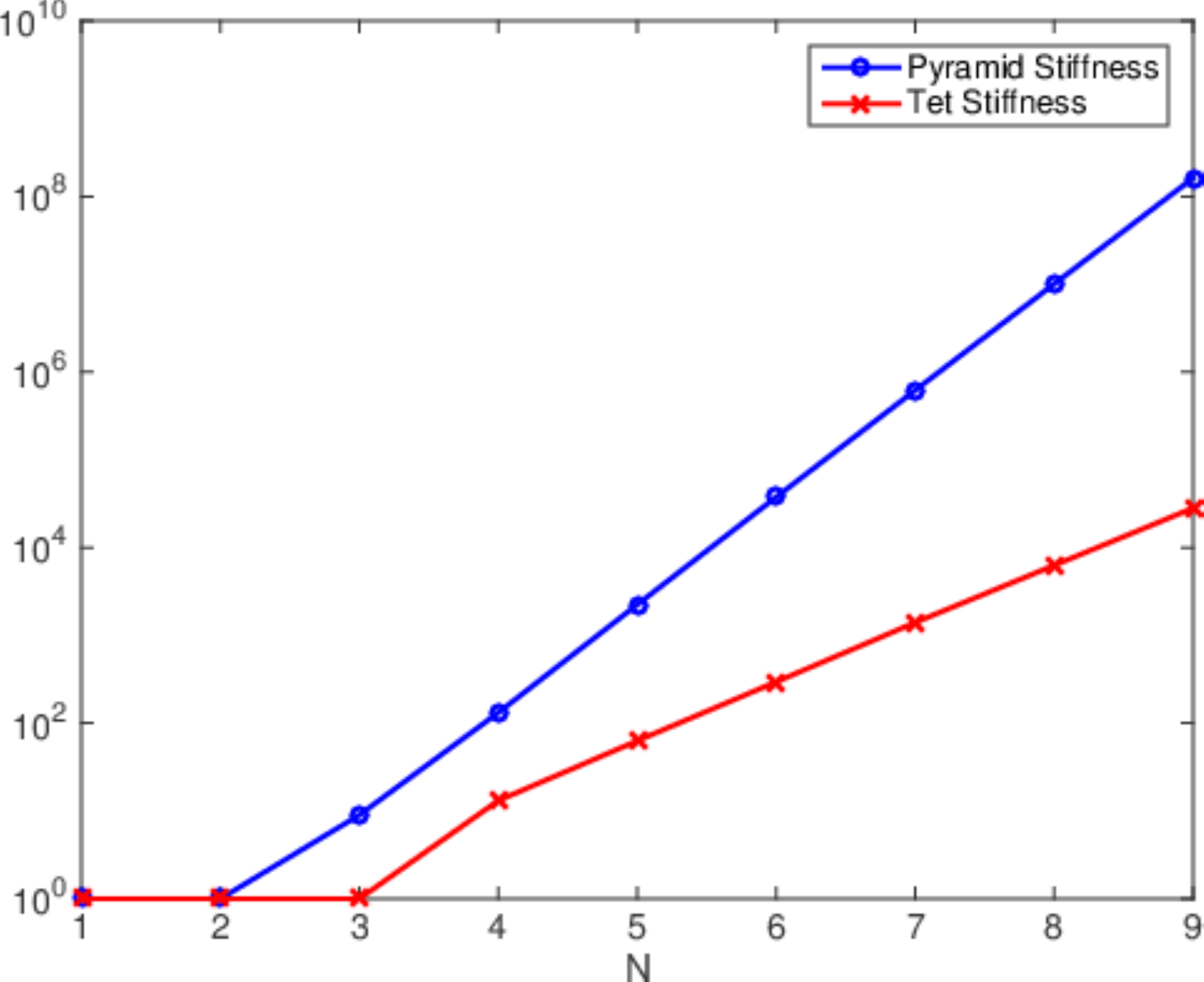}}
\caption{Condition numbers of reference tetrahedral and pyramidal Bernstein-Bezier mass and stiffness matrices at various orders of approximation $N$.}
\label{fig:condnums}
\end{figure}

\section{Acknowledgements}

JC would like to thank John Evans for informative discussions and for posing the question of how to construct a Bernstein-Bezier basis on the pyramid.  Both authors would like to acknowledge the support of NSF (award number DMS-1216674) in this research.  



\bibliographystyle{plain}
\bibliography{bernpyr}

\end{document}